\newcommand{\bb}{\boldsymbol}
\newcommand{\CC}{\mathbb C}
\newcommand{\RR}{\mathbb R}
\newcommand{\ee}{\mathrm e}
\newtheorem{theorem}{Theorem}
\newtheorem{remark}{Remark}
\newenvironment{keywords}{\noindent\textbf{Keywords:}}{}
\title{Direction splitting of $\varphi$-functions in exponential integrators
  for $d$-dimensional problems in Kronecker form}
\author{Marco Caliari\footnote{Department of Computer Science,
    University of Verona, Italy, \texttt{marco.caliari@univr.it},
  corresponding author}
  \and Fabio Cassini\footnote{Department of Computer Science,
    University of Verona, Italy, \texttt{fabio.cassini@univr.it}}}
\begin{document}
  \maketitle
  \begin{abstract}
  In this manuscript, we propose an efficient, practical and easy-to-implement
  way to approximate actions of $\varphi$-functions for matrices with
  $d$-dimensional
  Kronecker sum structure in the context of exponential integrators up
  to second order. The
  method is based on a direction splitting of the involved matrix functions,
  which lets us exploit the highly efficient level 3 BLAS for the actual
  computation of the required actions in a $\mu$-mode fashion. The approach
  has been successfully tested on two- and three-dimensional
  problems with various exponential
  integrators, resulting in a consistent speedup with respect to
  a technique designed to compute actions of $\varphi$-functions
  for Kronecker sums.
  \end{abstract}
  \begin{keywords}
    exponential integrators, $\mu$-mode, $\varphi$-functions, direction
    splitting, Kronecker form
  \end{keywords}
  \section{Introduction}
  The problem of computing actions of exponential and exponential-like
  functions
  with Kronecker sum structure received a lot of attention in the last
  years~\cite{CCEOZ22,CCZ23,CCZ23kp,CMM23,LZL21,LZZ22,MMPC22}.
  Indeed, the efficient approximation of such quantities allows to effectively
  employ
  exponential integrators for the time integration of large stiff systems of
  Ordinary
  Differential Equations (ODEs).
  More in detail, we suppose to work with the following stiff system of ODEs
\begin{subequations}\label{eq:ODE}
  \begin{equation}\label{eq:ODEeq}
    \left\{
    \begin{aligned}
      \boldsymbol u'(t)&=K\boldsymbol u(t)+\boldsymbol g(t,\boldsymbol u(t)),
      \quad t>0,\\
      \boldsymbol u(0)&=\boldsymbol u_0.
    \end{aligned}\right.
  \end{equation}
  Here $\boldsymbol g(t,\boldsymbol u(t))$ is a generic nonlinear function
  of $t$
  and of the unknown $\boldsymbol u(t) \in \CC^N$, with $N=n_1\cdots n_d$,
  while $K\in\CC^{N\times N}$ is a matrix with $d$-dimensional
  Kronecker sum structure, i.e.,
  \begin{equation}\label{eq:kronsum}
    K = A_d \oplus A_{d-1} \oplus \cdots \oplus A_1 =
    \sum_{\mu=1}^d A_{\otimes\mu},
    \quad A_{\otimes\mu} = I_d \otimes \cdots \otimes I_{\mu+1} \otimes A_\mu
    \otimes I_{\mu-1} \otimes \cdots \otimes I_1,
  \end{equation}
  \end{subequations}
  where $A_\mu\in \CC^{n_\mu\times n_\mu}$,  and $I_\mu$ is
  the identity matrix of size $n_\mu$. Here and throughout the paper the
  symbol~$\otimes$ denotes the standard Kronecker product of matrices,
  while~$\oplus$ is employed for the Kronecker sum of matrices. Moreover, we
  refer to system~\eqref{eq:ODE} as a system in \emph{Kronecker form} or with
  \emph{Kronecker sum structure}.

  This kind of systems naturally arises in many contexts. For example, for
  $d=2$, such a structure appears in constant coefficient
  matrix Riccati differential equations
  (see, for instance, Reference~\cite[Ch.~3]{AKFIJ03})
  \begin{equation}\label{eq:Riccati}
    \left\{\begin{aligned}
    \boldsymbol U'(t)&=A_1\boldsymbol U(t)+\boldsymbol U(t)A_2^{\mathsf T}+C
    +\boldsymbol U(t)B\boldsymbol U(t),\\
    \boldsymbol U(0)&=\boldsymbol U_0,
  \end{aligned}\right.
  \end{equation}
  where 
  $\boldsymbol U(t)\in\CC^{n_1\times n_2}$, $B\in\CC^{n_2\times n_1}$,
  and $C\in\CC^{n_1\times n_2}$.
  Indeed, using the properties of the Kronecker product~\cite{CCZ23kp}, we can
  rewrite equivalently
  such a matrix equation as a system of ODEs in Kronecker form~\eqref{eq:ODE},
  i.e.,
  \begin{equation}\label{eq:ODE2d}
    \left\{
    \begin{aligned}
      \boldsymbol u'(t)&=((I_2\otimes A_1)+(A_2\otimes I_1))\boldsymbol u(t)
      +\mathrm{vec}(C+\boldsymbol U(t)B\boldsymbol U(t)),\\
      \boldsymbol u(0)&=\mathrm{vec}(\boldsymbol U_0),
    \end{aligned}\right.
  \end{equation}
  where $\mathrm{vec}$ is the operator which stacks the columns of the input
  matrix in a single vector.

  Systems with Kronecker sum structure often arise also when applying
  the method of lines to approximate numerically the solution of a Partial
  Differential Equation (PDE) defined on a tensor product domain and
  appropriate boundary conditions.
  Indeed, after semi-discretization in space
  of well-known parabolic equations such as
  Allen--Cahn, Brusselator, Gray--Scott,
  advection--diffusion--reaction~\cite{CCZ23,CCZ23kp}
  or Schr\"odinger equations~\cite{CCEOZ22},
  we obtain a large stiff system of ODEs in form~\eqref{eq:ODE}.

  Once system~\eqref{eq:ODE} is given, many techniques can be employed to
  numerically integrate it in time, and in particular we are interested in the
  application of exponential integrators~\cite{HO10}. In fact, they are a
  prominent
  way to perform the required task since they enjoy favorable
  stability properties
  that make them suitable to work in the stiff regime. These kinds of schemes
  require the computation of the action of the matrix exponential and of
  exponential-like matrix
  functions (the so-called $\varphi$-functions) on vectors. They are
  defined, for a
  generic matrix $X\in\CC^{N\times N}$, as
\begin{subequations}
  \begin{equation}\label{eq:integraldef}
    \varphi_0(X) = \ee^X, \quad \varphi_\ell(X) = \frac{1}{(\ell-1)!}\int_0^1
    \ee^{(1-\theta)X}\theta^{\ell-1}d\theta, \quad \ell > 0,
  \end{equation}
  and their Taylor series expansion is given by
  \begin{equation}\label{eq:seriesdef}
    \varphi_\ell(X) = \sum_{i=0}^{\infty} \frac{X^i}{(i+\ell)!},
    \quad \ell \geq 0.
  \end{equation}
  \end{subequations}
  When the size of $X$ allows, it is common in practice to approximate
  such matrix functions by means of diagonal Pad\'e
  approximations~\cite{AMH10,BSW07,SW09}
  or via polynomial approximations~\cite{LYL22}.
  On the other hand, when $X$ is large sized, this approach is computationally
  unfeasible, and many algorithms have been developed to perform directly the
  action of $\varphi$-functions on vectors. We mention, among the others,
  Krylov-based techniques~\cite{GRT18,LPR19,NW12},
  direct polynomial methods~\cite{AMH11,CCZ20,CKOR16,LYL22},
  and hybrid techniques~\cite{CCZ22b}.
When $X$ is in fact a matrix $K$ with Kronecker sum
structure~\eqref{eq:kronsum},
  it is possible to exploit this information to compute more efficiently the
  action of the $\varphi$-functions on a vector. Indeed, let us consider
  $\ell=0$, so that $\varphi_0(K) = \ee^K$. Then, it is easy to
  see~\cite{CCZ23kp} that computing
  \begin{equation}\label{eq:vecexp}
    \bb e = \ee^K\bb v=
    \ee^{A_d\oplus A_{d-1}\oplus\cdots\oplus A_1}\bb v=
    \left(\ee^{A_d}\otimes \ee^{A_{d-1}}\otimes\cdots\otimes\ee^{A_1}\right)
        \bb v
  \end{equation}
  is mathematically equivalent to compute
\begin{equation}\label{eq:tuckerexp}
  \bb E=\bb V \times_1 \ee^{A_1}\times_2
  \cdots\times_d \ee^{A_d},
\end{equation}
which we refer to as the \emph{tensor formulation}.
Here, $\bb E$ and $\bb V$ are order-$d$ tensors of size
$n_1\times\cdots\times n_d$ that satisfy
$\mathrm{vec}(\bb E)=\bb e$ and $\mathrm{vec}(\bb V)=\bb v$, respectively,
while $\mathrm{vec}$ is the operator which
stacks the columns of the input tensor into a suitable single column vector.
The symbol $\times_\mu$ denotes the tensor--matrix product along the
mode $\mu$,
which is also known as $\mu$-mode product, and the computation of consecutive
$\mu$-mode products (as it happens in formula~\eqref{eq:tuckerexp})
is usually referred to as \emph{Tucker operator}.
Notice that the element $e_{i_1\ldots i_d}$ of the tensor $\bb E$ turns
out to be
\begin{equation}\label{eq:forloop}
  e_{i_1\ldots i_d}=\sum_{j_d=1}^{n_d}\cdots\sum_{j_1=1}^{n_1}v_{j_1\ldots j_d}
  \prod_{\mu=1}^d
  e^\mu_{i_\mu j_\mu},\quad 1\le i_\mu\le n_\mu,
\end{equation}
  being $e^{\mu}_{i_\mu j_\mu}$ the generic element of $\ee^{A_\mu}$.
  Although formulas~\eqref{eq:vecexp}, \eqref{eq:tuckerexp}, and
  \eqref{eq:forloop} are mathematically equivalent,
  the direct usage of both formulas~\eqref{eq:vecexp} and
  \eqref{eq:forloop} is much less efficient than formula~\eqref{eq:tuckerexp},
  which is
  implemented by exploiting the highly performant level 3 BLAS after computing
  the \emph{small} sized matrix exponentials $\ee^{A_\mu}$.
  Indeed, for instance, formula~\eqref{eq:tuckerexp} in two dimensions requires
  two matrix-matrix products, as it reduces to
  $\ee^{A_1}\bb V\left(\ee^{A_2}\right)^\mathsf{T}$, while
  in the $d$-dimensional case it requires $d$ level 3 BLAS calls.
  This technique led
  to the so-called $\mu$-mode integrator~\cite{CCEOZ22}, and has
  been successfully used to integrate in time semi-discretizations of
  advection--diffusion--reaction
  and Schr\"odinger equations, eventually in
  combination with a splitting scheme. In particular, it is reported
  a consistent speedup
  with respect to state-of-the-art techniques to compute the action
  of the matrix exponential on a vector, as well as a very good scaling when
  performing GPUs simulations.
  We invite a reader interested in more details and applications
  of the Tucker operator to check References~\cite{CCEOZ22,CCZ23kp}.

  When computing actions of $\varphi$-function of higher order, i.e.,
  $\varphi_\ell(K)\bb v$ with $\ell>0$, the last equality in
  formula~\eqref{eq:vecexp} does not hold anymore.
  In Reference~\cite{CCZ23} the authors propose an approach to overcome
  this difficulty,
  by developing a method based on the application of a quadrature formula
  to the integral definition of the $\varphi$-functions~\eqref{eq:integraldef}
  which requires, for each quadrature point,  the action
  of the matrix exponential performed by a Tucker
  operator on the tensor~$\boldsymbol V$.
  In this way, it is
  possible to compute the required action of $\varphi$-functions at a given
  tolerance.
  The technique, which has been named \textsc{phiks},
  has been developed for arbitrary dimension $d$ and
  is designed to compute not only $\varphi$-functions applied to a
  vector but also linear combinations of actions of $\varphi$-functions.
  In addition
  the desired quantities can be made available simultaneously at suitable
  different time scales.
  These features allow to implement high stiff order exponential
  integrators, such as exponential Runge--Kutta schemes, in a more
  efficient way compared to the usage of state-of-the-art techniques
  to compute combinations of actions of $\varphi$-functions.
  Another very recent method based on quadrature rules applied
  to formula~\eqref{eq:integraldef} is presented in
  Reference~\cite{CMM23}, where the technique is described only in dimension
  $d\le 3$ for actions of single $\varphi$-functions at a given time scale.
  Other approaches for the action of $\varphi$-functions for matrices with
  Kronecker sum structure are available in the literature. We mention for
  instance
  Reference~\cite{MMPC22}, whose algorithm is based on the solution of Sylvester
  equations and is currently limited to dimension $d=2$.
  Another way to approximate
  the action of $\varphi$-functions of the Sylvester operator
  $A_1\boldsymbol V+\boldsymbol VA_2^{\sf T}$ or the Lyapunov operator
  $A\boldsymbol V+\boldsymbol VA^{\sf T}$ for the solution of Riccati
  differential equations, possibly in the context of low-rank approximation,
  is presented in References~\cite{LZL21,LZZ22}.

  In this manuscript we propose an alternative way to approximate
  $\varphi_\ell(K)\bb v$, with $\ell>0$ and $K$ a matrix with $d$-dimensional
  Kronecker sum structure, in the context of exponential integrators up to
  second order.
  The approach, that we call \textsc{phisplit}, is based on a direction
  splitting of the matrix $\varphi$-functions of $K$, which
  generates an approximation error compatible with the one of the time marching
  numerical scheme. The evaluation of the required actions is performed
  in a $\mu$-mode fashion by means of a \emph{single} Tucker operator for each
  $\varphi$-function, exploiting the highly efficient level 3 BLAS.
  After recalling some popular exponential integrators in
  Section~\ref{sec:expint},
  we describe in Section~\ref{sec:dirsplit} the proposed technique, as well
  as how to employ it to implement the just mentioned exponential schemes.
  Then, in Section~\ref{sec:numexp}
  we present some numerical experiments that show the effectiveness of
  \textsc{phisplit}, and we finally draw some conclusions in
  Section~\ref{sec:conclusions}.

  \section{Recall of some exponential integrators up to order
    two}\label{sec:expint}
When numerically integrating stiff semilinear ODEs in form~\eqref{eq:ODE},
where the stiff part is represented by the matrix $K$, a prominent approach
is to use exponential integrators~\cite{HO10}. For
convenience of the reader, we report here
(for simplicity in a constant time step size scenario)
a possible derivation of the exponential schemes
that will be employed
later in the numerical experiments of Section~\ref{sec:numexp}.

The starting point is the variation-of-constants formula
\begin{equation}\label{eq:voc}
\begin{split}
  \boldsymbol u(t_{n+1})&=\ee^{\tau K}\boldsymbol u(t_n)+\int_{t_n}^{t_{n+1}}
  \ee^{(t_{n+1}-s)K}\boldsymbol g(s,\boldsymbol u(s))ds\\
  &=\ee^{\tau K}\boldsymbol u(t_n)+\tau\int_0^1
  \ee^{(1-\theta)\tau K}
  \boldsymbol g(t_n+\tau\theta,\boldsymbol u(t_n+\tau\theta))d\theta
\end{split}
\end{equation}
which expresses the analytical solution of system~\eqref{eq:ODEeq} at time
$t_{n+1}=t_n+\tau$, where $\tau$ is the time step size.
If we approximate
the integral with the rectangle left rule, we get the scheme
\begin{equation}\label{eq:lawsoneuler}
  \boldsymbol u_{n+1}=
\ee^{\tau K}\boldsymbol u_n+\tau
  \ee^{\tau K}\boldsymbol g(t_n,\boldsymbol u_n)=
  \ee^{\tau K}(\boldsymbol u_n+\tau
  \boldsymbol g(t_n,\boldsymbol u_n)),
\end{equation}
which is known as Lawson--Euler scheme
(see Reference~\cite[Sec.~A.1.1]{BSW05}).
It is of order one and exact for linear homogeneous problems
with constant coefficients.
The linear part of system~\eqref{eq:ODEeq}
is solved exactly and thus no restriction on the time step due to the stiffness
is necessary.
If instead the trapezoidal quadrature rule is applied to the integral in
equation~\eqref{eq:voc}, we get the approximation
\begin{equation*}
  \boldsymbol u(t_{n+1})\approx\ee^{\tau K}\boldsymbol u(t_n)+
  \frac{\tau}{2}\left(\ee^{\tau K}\boldsymbol g(t_n,\boldsymbol u(t_n))+
  \boldsymbol g(t_{n+1},\boldsymbol u(t_{n+1}))\right).
\end{equation*}
An explicit time marching scheme is then obtained by creating an intermediate
stage $\boldsymbol u_{n2}$
which approximates $\boldsymbol u(t_{n+1})$ in the right hand side
by the Lawson--Euler scheme~\eqref{eq:lawsoneuler}. Overall, we get
\begin{equation}\label{eq:lawson2}
  \begin{aligned}
    \boldsymbol u_{n2}&=\ee^{\tau K}(\boldsymbol u_n+
    \tau\boldsymbol g(t_n,\boldsymbol u_n)),\\
    \boldsymbol u_{n+1}&=\ee^{\tau K}\left(\boldsymbol u_n+
    \frac{\tau}{2}
    \boldsymbol g(t_n,\boldsymbol u_{n})\right)+
    \frac{\tau}{2}\boldsymbol g(t_{n+1},\boldsymbol u_{n2}),
  \end{aligned}
\end{equation}
which is an Lawson method of order two, also known in literature
as Lawson2b (see Reference~\cite[Sec.~A.1.6]{BSW05}).

A different approach to the approximation of the integral in
formula~\eqref{eq:voc} leads to the so-called exponential Runge--Kutta methods.
Indeed, if we approximate only the nonlinear function
$\bb g(t_n+\tau\theta,\bb u(t_n+\tau\theta))$
by
$\boldsymbol g(t_n,\boldsymbol u(t_n))$, by using the definition
of $\varphi_1$ function in equation~\eqref{eq:integraldef} we get the
scheme
\begin{equation*}
  \boldsymbol u_{n+1}=\ee^{\tau K}\boldsymbol u_n+
  \tau\varphi_1(\tau K)\boldsymbol g(t_n,\boldsymbol u_n),
\end{equation*}
which can be equivalently rewritten as
\begin{equation}\label{eq:expEuler}
  \boldsymbol u_{n+1}=\boldsymbol u_n+
  \tau\varphi_1(\tau K)(K\boldsymbol u_n+\boldsymbol g(t_n,\boldsymbol u_n))
\end{equation}
and is known as exponential Euler (or exponential N\o{}rsett--Euler,
see Reference~\cite[Sec.~A.2.1]{BSW05}).
It is a first order scheme and it is exact for linear problems with constant
coefficients.
Another possibility is to interpolate
$\boldsymbol g(t_n+\tau\theta,\boldsymbol u(t_n+\tau\theta))$ with a polynomial
of degree one in $\theta$ at~$0$ and~$1$, thus
obtaining the approximation
\begin{equation*}
  \boldsymbol u(t_{n+1})\approx
  \ee^{\tau K}\boldsymbol u(t_n)+\tau\int_0^1
  \ee^{(1-\theta)\tau K}(\theta\boldsymbol g(t_{n+1},\boldsymbol u(t_{n+1}))+
  (1-\theta)\boldsymbol g(t_n,\boldsymbol u(t_n)))d\theta.
\end{equation*}
By taking a stage $\boldsymbol u_{n2}$
which approximates $\boldsymbol u(t_{n+1})$ in the
right hand side by the exponential Euler scheme and
using the definitions of $\varphi_1$ and $\varphi_2$ functions in
formula~\eqref{eq:integraldef}, we obtain
the second order exponential Runge--Kutta
scheme (also known in literature as ETD2RK,
see Reference~\cite[Sec.~A.2.5]{BSW05})
\begin{equation}\label{eq:ETD2RK}
  \begin{aligned}
    \boldsymbol u_{n2}&=\boldsymbol u_n+
    \tau\varphi_1(\tau K)(K\boldsymbol u_n+
    \boldsymbol g(t_n,\boldsymbol u_n)),\\
    \boldsymbol u_{n+1}&=\boldsymbol u_{n2}+
    \tau\varphi_2(\tau K)(\boldsymbol g(t_{n+1},\boldsymbol u_{n2})-
    \boldsymbol g(t_n,\boldsymbol u_n)).
  \end{aligned}
\end{equation}

Finally, we consider the Rosenbrock--Euler method
(see Reference~\cite[Ex.~2.20]{HO10}) which, in the autonomous case,
can be obtained from
the application of the exponential Euler scheme to the linearized
differential equation
\begin{equation*}
  \boldsymbol u'(t)=\left(K+
  \frac{\partial \boldsymbol g}{\partial \boldsymbol u}(\boldsymbol u_n)\right)
  \boldsymbol u(t)
  +\left(\boldsymbol g(\boldsymbol u(t))-
  \frac{\partial \boldsymbol g}{\partial \boldsymbol u}(\boldsymbol u_n)
  \boldsymbol u(t)\right)
  =K_n\boldsymbol u(t)+\boldsymbol g_n(\boldsymbol u(t)),
\end{equation*}
where $K_n$ is the Jacobian evaluated at $\boldsymbol u_n$
and $\boldsymbol g_n(\boldsymbol u(t))$ the
remainder.
The resulting scheme is
\begin{equation}\label{eq:expRosEuler}
  \boldsymbol u_{n+1}=
  \boldsymbol u_n+
  \tau\varphi_1(\tau K_n)(K\boldsymbol u_n+\boldsymbol g(\boldsymbol u_n)).
\end{equation}
It is a second order method and, in contrast to all the methods
presented above, it requires the evaluation of
a different matrix function
$\varphi_1(\tau K_n)$ at \emph{each time step}.
The extension to non-autonomous
systems is straightforward,
see Reference~\cite[Ex.~2.21]{HO10}.
\begin{remark}
  We considered here only a selected number of exponential integrators
  which require the action
  of $\varphi$-functions. Other exponential-type schemes of
  first or second order could benefit
  from the $\mu$-mode splitting technique for computing
  $\varphi$-functions of Kronecker sums that we present in this work.
  We mention, among the others,
  corrected splitting schemes~\cite{EO15}, low-regularity schemes~\cite{RS21},
  and Magnus integrators for linear time dependent coefficient
  non-homogeneous equations~\cite{GOT06}.
\end{remark}

\section[Direction splitting of phi-functions]%
        {Direction splitting of $\varphi$-functions}\label{sec:dirsplit}
  As mentioned in the introduction, we suppose that we are dealing with a matrix
  $K$ with Kronecker sum structure~\eqref{eq:kronsum},
  and we are interested in computing efficiently
  $\varphi_\ell(\tau K)\boldsymbol v$, $\boldsymbol v \in \CC^N$,
  $\tau \in \RR$, in the
  context of exponential integrators.
  In particular, we know that by employing a scheme of order~$p$,
  we make a local error $\mathcal{O}(\tau^{p+1})$, being $\tau$ the (constant)
  time step size. Hence, if the integrator requires to compute a quantity
  of the form $\tau^q\varphi_\ell(\tau K)$, with $q>0$, it is sufficient to
  approximate $\varphi_\ell(\tau K)$ with an error $\mathcal{O}(\tau^{p+1-q})$,
  to preserve the order of convergence. For our schemes of interest,
  i.e., the ones presented in the previous section, we make use of the following
  result.
  \begin{theorem}
    Let $K$ be a matrix with $d$-dimensional Kronecker sum
    structure~\eqref{eq:kronsum}. Then, for $\ell>0$, we have
  \begin{equation}\label{eq:dsphi}
    \varphi_\ell(\tau K) = (\ell!)^{d-1}\left(
    \varphi_\ell(\tau A_d)\otimes\varphi_\ell(\tau A_{d-1})\otimes
    \cdots\otimes\varphi_\ell(\tau A_1))
    \right)
    +\mathcal{O}(\tau^2).
    \end{equation}
\end{theorem}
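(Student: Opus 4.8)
The plan is to match the Taylor expansions of both sides in powers of $\tau$ up to the linear term, since the asserted error is $\mathcal{O}(\tau^2)$. From the series definition~\eqref{eq:seriesdef}, for any square matrix $X$ one has the first-order expansion $\varphi_\ell(\tau X) = \tfrac{1}{\ell!}I + \tfrac{\tau}{(\ell+1)!}X + \mathcal{O}(\tau^2)$, where $I$ is the identity of the appropriate size; this single fact drives the whole argument.

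First I would expand the left-hand side. Using the expansion with $X=K$ and then the Kronecker sum structure~\eqref{eq:kronsum}, I obtain $\varphi_\ell(\tau K) = \tfrac{1}{\ell!}I_N + \tfrac{\tau}{(\ell+1)!}\sum_{\mu=1}^d A_{\otimes\mu} + \mathcal{O}(\tau^2)$.

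Next I would expand the right-hand side. Each factor becomes $\varphi_\ell(\tau A_\mu) = \tfrac{1}{\ell!}\bigl(I_\mu + \tfrac{\tau}{\ell+1}A_\mu\bigr) + \mathcal{O}(\tau^2)$, so the product equals $\bigl(\tfrac{1}{\ell!}\bigr)^d$ times a Kronecker product of terms of the form $I_\mu + \tfrac{\tau}{\ell+1}A_\mu$. The key step is to expand this Kronecker product by multilinearity of $\otimes$ and retain only terms up to first order in $\tau$: the zeroth-order term is $I_N$, while the first-order terms arise by replacing exactly one identity factor $I_\mu$ with $\tfrac{\tau}{\ell+1}A_\mu$, so their sum is precisely $\tfrac{\tau}{\ell+1}\sum_{\mu=1}^d A_{\otimes\mu}$; all remaining terms contain two or more $A_\mu$ factors and are $\mathcal{O}(\tau^2)$. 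Multiplying by the prefactor $(\ell!)^{d-1}$ and using $(\ell!)^{d-1}\bigl(\tfrac{1}{\ell!}\bigr)^d = \tfrac{1}{\ell!}$ together with $\tfrac{1}{\ell!}\cdot\tfrac{1}{\ell+1} = \tfrac{1}{(\ell+1)!}$ turns the right-hand side into exactly the expansion of the left-hand side.

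Comparing the two expansions term by term shows their difference is $\mathcal{O}(\tau^2)$, establishing~\eqref{eq:dsphi}. I expect the only delicate point, and hence the main obstacle, to be the bookkeeping of the Kronecker-product expansion: one must verify carefully that multilinearity reproduces the Kronecker sum $\sum_\mu A_{\otimes\mu}$ at first order, and that every cross term is genuinely of order $\tau^2$. The factorial collapse $(\ell!)^{d-1}(1/\ell!)^d = 1/\ell!$ is elementary, but it is exactly what makes the leading constant terms of the two sides agree, explaining the otherwise mysterious prefactor appearing in~\eqref{eq:dsphi}.
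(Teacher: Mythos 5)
Your proposal is correct and follows essentially the same route as the paper's proof: both expand each factor $\varphi_\ell(\tau A_\mu)$ to first order via the series definition~\eqref{eq:seriesdef}, use multilinearity of the Kronecker product to identify the zeroth-order identity term and the first-order Kronecker sum $\tfrac{\tau}{(\ell+1)!}\sum_\mu A_{\otimes\mu}$, and let the prefactor $(\ell!)^{d-1}$ collapse the constants so that the result matches the expansion of $\varphi_\ell(\tau K)$. The only cosmetic difference is that you expand both sides and compare, whereas the paper rewrites the right-hand side directly into $\tfrac{I}{\ell!}+\tfrac{\tau K}{(\ell+1)!}+\mathcal{O}(\tau^2)=\varphi_\ell(\tau K)+\mathcal{O}(\tau^2)$.
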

\begin{proof}
For compactness of presentation, we employ the following notation
\begin{equation*}
  X_d \otimes X_{d-1} \otimes \cdots \otimes X_1 = \bigotimes_{\mu=d}^1 X_\mu,
  \quad X_\mu\in\CC^{n_\mu \times n_\mu}.
\end{equation*}
Then, by using the Taylor expansion of the $\varphi_\ell$
function~\eqref{eq:seriesdef}
and the properties of the Kronecker product (see Reference~\cite{L00}
for a comprehensive
review) we obtain
\begin{equation*}
  \begin{aligned}
    (\ell!)^{d-1}\bigotimes_{\mu=d}^1\varphi_\ell(\tau A_\mu)
    &=(\ell!)^{d-1}\bigotimes_{\mu=d}^1\left(\frac{I_\mu}{\ell!}+
  \frac{\tau A_\mu}{(\ell+1)!}+\mathcal{O}(\tau^2)\right)\\
  &=(\ell!)^{d-1}\left(\frac{1}{(\ell!)^{d}}\bigotimes_{\mu=d}^1 I_\mu
  +\frac{\tau}{(\ell!)^{d-1}(\ell+1)!}\sum_{\mu=1}^dA_{\otimes\mu}+
  \mathcal{O}(\tau^2)
  \right) \\
  &= \frac{I}{\ell!}+\frac{\tau K}{(\ell+1)!}+\mathcal{O}(\tau^2)\\
  &= \varphi_\ell(\tau K) + \mathcal{O}(\tau^2),
  \end{aligned}
\end{equation*}
where $I$ is the identity matrix of size $N\times N$.
\end{proof}%
Formula~\eqref{eq:dsphi} allows for an efficient $\mu$-mode based
implementation,
similarly to the matrix exponential case~\eqref{eq:tuckerexp}. Indeed, given
an order-$d$ tensor $\boldsymbol V$ such that
$\boldsymbol v = \mathrm{vec}(\boldsymbol V)$, if we define
\begin{equation*}
  \boldsymbol p_\ell^{(2)}=\varphi_{\ell}^{(2)}(\tau K)\boldsymbol v=
  \mathrm{vec}\left(((\ell!)^{d-1}\boldsymbol V)\times_1
  \varphi_{\ell}(\tau A_1)\times_2
  \varphi_{\ell}(\tau A_2)\times_3 \cdots\times_d   \varphi_{\ell}(\tau A_d)
  \right)
\end{equation*}
we have
\begin{equation*}
  \varphi_{\ell}(\tau K)\boldsymbol v =
  \boldsymbol p_\ell^{(2)}+ \mathcal{O}(\tau^2).
\end{equation*}
We refer to
\begin{equation*}
  \boldsymbol P_\ell^{(2)}=\left((\ell!)^{d-1}\boldsymbol V\right)\times_1
  \varphi_{\ell}(\tau A_1)\times_2
  \varphi_{\ell}(\tau A_2)\times_3 \cdots\times_d   \varphi_{\ell}(\tau A_d)
\end{equation*}
as the \emph{tensor formulation} of $\boldsymbol p_\ell^{(2)}$,
so that $\mathrm{vec}(\boldsymbol P_\ell^{(2)})=\boldsymbol p_\ell^{(2)}$.
This is precisely the formulation that we propose to employ when
actions of $\varphi$-functions of a matrix with Kronecker sum structure
are required for
the above exponential integrators.
From now on, we refer to this technique as the \textsc{phisplit} approach.
Notice that, after the computation of the \emph{small} sized matrix functions
$\varphi_\ell(\tau A_\mu)$, with $\mu=1,\ldots,d$, a single Tucker operator is
required to evaluate the tensor formulation approximation.

\subsection[Evaluation of small sized matrix phi-functions]%
           {Evaluation of small sized matrix
             $\varphi$-functions}\label{sec:phiquad}
The matrices $A_\mu$ have a much smaller size compared to $K$, and
the corresponding matrix $\varphi$-functions can be directly computed without
much effort. In particular, for $\varphi_0$ (i.e., the exponential function),
we employ the most popular technique for generic matrices, which is based
on a diagonal
rational Pad\'e approximation coupled with a scaling and squaring
algorithm (see Reference~\cite{AMH10}). This procedure is encoded in the
internal
\textsc{matlab} function \texttt{expm}.
Different algorithms that could be used as well, based on Taylor approximations
of the matrix exponential, can be found in References~\cite{CZ19,SID19}.

For the computation of higher order matrix $\varphi$-functions we rely on a
quadrature formula applied to the integral definition~\eqref{eq:integraldef}.
For a generic matrix $X\in\CC^{N\times N}$, we have then
\begin{equation}\label{eq:phiquad}
  \varphi_\ell(X) \approx \sum_{i=1}^q w_i\ee^{(1-\theta_i)X}
  \frac{\theta_i^{\ell-1}}{(\ell-1)!}, \quad \ell>0.
\end{equation}
In order to avoid an impractically large number of quadrature points, we couple
the procedure with a modified scaling and squaring algorithm (see
Reference~\cite{SW09}). In fact, we scale the original matrix $X$ by $2^s$,
where $s$ is a natural number defined so that $\lVert X/2^s\rVert_1<1$,
we approximate $\varphi_\ell(X/2^s)$ by means
of formula~\eqref{eq:phiquad} and we recover $\varphi_\ell(X)$ by the recurrence
\begin{equation}\label{eq:squarephi}
  \varphi_{\ell}(2z)=\frac{1}{2^\ell}\left[\ee^z\varphi_\ell(z)+
    \sum_{k=1}^\ell\frac{\varphi_k(z)}{(\ell-k)!}\right].
\end{equation}
In order to compute the needed matrix exponentials, we again employ the internal
\textsc{matlab} function \texttt{expm}.
Notice that the squaring of $\varphi_\ell$ also requires the evaluation
of all the $\varphi_j$ functions, for $0< j<\ell$. In particular, for the
first squaring step, we compute themselves by
formula~\eqref{eq:phiquad} with $\ell=j$ and using the \emph{same} set of
matrix exponentials
already available for the quadrature procedure of $\varphi_\ell(X/2^s)$.
For all the subsequent squaring steps, we use formula~\eqref{eq:squarephi}
itself
with $\ell=j$.
As a consequence of this procedure, the computation of a single matrix
function $\varphi_\ell$
makes available all the matrix functions $\varphi_j$, with $0\le j\le \ell$.
In practice, for the quadrature we employ the Gauss--Legendre--Lobatto
formula, which
allows for high precision with a moderate number of quadrature nodes. Moreover,
since it uses the endpoints of the quadrature interval, we make use of
the matrix
exponential $\ee^{X/2^s}$ ($\theta_1 = 0$), which is also required
for the subsequent squaring procedure, and we avoid generating the last matrix
exponential since $\theta_q = 1$.
The overall procedure is implemented in \textsc{matlab} language in our
function \texttt{phiquad}.

Alternatively, for the computation of the matrix $\varphi$-functions, it is
possible to employ the \textsc{matlab} routine \texttt{phipade}
(see Reference~\cite{BSW07}),
whose algorithm is based on a rational Pad\'e approximation coupled with
the squaring formula~\eqref{eq:squarephi}.
Another recent technique that employs a polynomial Taylor
approximation instead of rational Pad\'e one is presented in
Reference~\cite{LYL22}.

\subsection{Practical implementation of the exponential
  integrators}\label{sec:methods}
The implementation of the Lawson methods introduced in
Section~\ref{sec:expint}, which require just actions of matrix exponentials,
does not suffer from any direction splitting error, thanks
to the equivalence between formulas~\eqref{eq:vecexp} and~\eqref{eq:tuckerexp}.
In particular, the tensor formulation of Lawson--Euler is
\begin{equation}\label{eq:lawsoneulertensor}
  \bb U_{n+1}=(\bb U_n+\tau \bb G(t_n,\bb U_n))\times_1 \ee^{\tau A_1}\times_2
  \cdots\times_d\ee^{\tau A_d},
\end{equation}
while the Lawson2b scheme is given by
\begin{equation}\label{eq:lawson2tensor}
  \begin{aligned}
    \boldsymbol U_{n2}&=(\bb U_n+\tau \bb G(t_n,\bb U_n))\times_1
    \ee^{\tau A_1}\times_2
  \cdots\times_d\ee^{\tau A_d},\\
  \boldsymbol U_{n+1}&=
  \left(\bb U_n+\frac{\tau}{2} \bb G(t_n,\bb U_n)\right)
  \times_1 \ee^{\tau A_1}\times_2
  \cdots\times_d\ee^{\tau A_d}+\frac{\tau}{2}\bb G(t_{n+1},\bb U_{n2}).
  \end{aligned}
\end{equation}
Here and in the subsequent formulas we have
\begin{equation*}
  \mathrm{vec}(\bb U) = \bb u, \quad
  \mathrm{vec}(\bb G(t, \bb U)) = \bb g(t,\bb u).
\end{equation*}
The remaining exponential integrators transform
as follows.
First of all, the action of the matrix $K$ on $\bb u_n$
is computed in tensor form as
\begin{equation*}
\sum_{\mu=1}^d\left(\bb U_n\times_\mu A_\mu\right)
\end{equation*}
without explicitly assembling the matrix $K$ (see Reference~\cite{CCZ22b}).
The exponential Euler \textsc{phisplit} method is
\begin{equation}\label{eq:expEulertensor}
  \bb U_{n+1}=\bb U_n+\tau
  \left(\sum_{\mu=1}^d(\boldsymbol U_n\times_\mu A_\mu)+\bb G(t_n,\bb U_n)\right)
  \times_1\varphi_1(\tau A_1)\times_2\cdots\times_d\varphi_1(\tau A_d).
\end{equation}
Notice that an alternative version, more appropriate for the cases in which
the contribute of $\bb G(t_n,\bb U_n)$ is particularly small, is
\begin{equation*}
  \bb U_{n+1}=\bb U_n\times_1\ee^{\tau A_1}\times_2\cdots\times_d\ee^{\tau A_d}+
  \tau\bb G(t_n,\bb U_n)
  \times_1\varphi_1(\tau A_1)\times_2\cdots\times_d\varphi_1(\tau A_d),
\end{equation*}
which is in fact an exact formula for linear autonomous problems.
By considering formulation~\eqref{eq:expEulertensor} for the exponential
Euler methods, the ETD2RK \textsc{phisplit} scheme becomes
\begin{equation}\label{eq:ETD2RKtensor}
\begin{aligned}
  \bb U_{n2}&=\bb U_n+\left(\sum_{\mu=1}^d(\bb U_n\times_\mu A_\mu)+
  \tau\bb G(t_n,\bb U_n)\right)
  \times_1\varphi_1(\tau A_1)\times_2\cdots\times_d\varphi_1(\tau A_d),\\
  \bb U_{n+1}&=\bb U_{n2}+\tau\left(\bb G(t_{n+1},\bb U_{n2})
  -\bb G(t_n,\bb U_n)\right)
  \times_1\varphi_2(\tau A_1)\times_2\cdots\times_d\varphi_2(\tau A_d).
\end{aligned}
\end{equation}
Finally, concerning the
exponential Rosenbrock--Euler method for
autonomous systems, we assume that the Jacobian $K_n$
can be written as a Kronecker sum, i.e.,
\begin{equation*}
  K_n=K+\frac{\partial \bb g}{\partial \bb u}(\bb u_n)=
  J_d(\bb U_n)\oplus J_{d-1}(\bb U_n)\oplus\cdots\oplus J_1(\bb U_n).
\end{equation*}
Therefore the exponential Rosenbrock--Euler \textsc{phisplit} method
 is
\begin{equation}\label{eq:expRosEulertensor}
  \bb U_{n+1}=\bb U_n+\left(\sum_{\mu=1}^d (\bb U_n\times_\mu A_\mu)+
  \tau\bb G(\bb U_n)\right)
  \times_1\varphi_1(\tau J_1(\bb U_n))\times_2\cdots
  \times_d\varphi_1(\tau J_d(\bb U_n)).
\end{equation}
  \section{Numerical experiments}\label{sec:numexp}
In this section we present numerical experiments to validate
the proposed approach \textsc{phisplit}.
In particular, we will
consider a two-dimensional example from linear quadratic control and a
three-dimensional
example which models an advection--diffusion--reaction equation. To perform the
time marching, we will employ the exponential
integrators of Section~\ref{sec:expint}
as described in Section~\ref{sec:methods} for the \textsc{phisplit} version.

As term of comparison, we will consider the approximation of actions of
$\varphi$-functions for matrices with Kronecker sum structure
using
\textsc{phiks}\footnote{\url{https://github.com/caliarim/phiks}}~\cite{CCZ23}.
This algorithm operates in tensor formulation using $\mu$-mode products, too,
but it requires an input tolerance, which we take proportional
to the local temporal order of the method and to the norm of the current
solution.
The proportionality constant is chosen
so that the error committed by the routine, measured against a reference
or analytical
solution, does not affect the temporal error.

To compute all the relevant tensor operations, i.e., Tucker operators and
$\mu$-mode
products, we use the functions contained in the package
KronPACK\footnote{\url{https://github.com/caliarim/KronPACK}}.
Moreover, to compute the needed matrix $\varphi$-functions,
we employ the internal \textsc{matlab} function \texttt{expm}
(for $\varphi_0$) and
the function \texttt{phiquad} (for $\varphi_\ell$, $\ell>0$), as presented in
Section~\ref{sec:phiquad}.
In terms of hardware, we run all the experiments employing an
Intel\textsuperscript{\textregistered}
Core\textsuperscript{\texttrademark} i7-10750H CPU with six physical cores and
16GB of RAM. As a software, we use MathWorks
MATLAB\textsuperscript{\textregistered} R2022a.
All the codes to reproduce the numerical examples, together with our
implementation of \textsc{phisplit} and the function \texttt{phiquad}, can be
found in a maintained GitHub
repository\footnote{\url{https://github.com/caliarim/phisplit}}.
We finally stress that the functions \texttt{phisplit} and
\texttt{phiquad} are fully compatible with GNU Octave.

\subsection{Linear quadratic control}
We present in this section a classical example from linear quadratic
control (see, for instance, References~\cite{MOPP18,P00}).
We are interested in the minimization over the scalar control $v(t)\in\RR$ of
the functional
\begin{equation*}
  \mathcal{J}(v) = \frac{1}{2}\int_0^T\left(\alpha s(t)^2
  + v(t)^2 \right) dt
\end{equation*}
subject to the constraints
\begin{equation*}
  \begin{aligned}
&    w'(t) = Aw(t) + bv(t), \quad w(0) = w_0, \\
&    s(t) = cw(t).
  \end{aligned}
\end{equation*}
Here $w(t)\in\RR^{n\times 1}$ is a column vector containing the state variables,
$s(t)\in\RR$ represents the scalar output,
$A\in\RR^{n\times n}$ is the system matrix,
$b\in\RR^{n\times 1}$ is the system column vector,
$c\in\RR^{1\times n}$ is a row vector, and $\alpha\in\RR^{+}$ is a
positive scalar.

Then, the solution of the constrained optimization problem is determined by
the optimal
control
\begin{equation*}
  v^*(t) = -b^{\mathsf T}\boldsymbol U(t)w(t),
\end{equation*}
where $\boldsymbol U(t)\in\RR^{n\times n}$
satisfies the symmetric Riccati  differential equation
\begin{equation}\label{eq:exric}
\left\{
\begin{aligned}
  &    \boldsymbol U'(t) = A^{\mathsf T}\boldsymbol U(t) + \boldsymbol U(t)A +
  C+ \boldsymbol U(t)B\boldsymbol U(t) ,\\
&    \boldsymbol U(0) = \boldsymbol Z,
\end{aligned}
\right.
\end{equation}
where $C=\alpha c^{\mathsf T}c$ and $B=-bb^\mathsf T$
(see Reference~\cite[Ch.~4]{AKFIJ03} for a comprehensive introduction
to the subject).

Here $\boldsymbol Z\in\RR^{n\times n}$ is a matrix containing all zeros entries.
Clearly, equation~\eqref{eq:exric} is in form~\eqref{eq:Riccati}, which
in turn can be seen as a problem with two-dimensional Kronecker
sum structure~\eqref{eq:ODE2d}
and integrated efficiently by means
of the techniques described in Section~\ref{sec:dirsplit}.
Notice also that the solution of equation~\eqref{eq:exric} converges to a steady
state determined by the algebraic Riccati equation
\begin{equation}\label{eq:steady}
  A^{\mathsf T}\boldsymbol U(t) +
  \boldsymbol U(t)A +C + \boldsymbol U(t)B\boldsymbol U(t) = 0.
\end{equation}

For our numerical experiment, similarly to what previously done in the
literature~\cite{LZL21,LZZ22,MOPP18,P00},
we take $A\in\RR^{\hat n^2\times \hat n^2}$
as the matrix obtained by the discretization with second order
centered finite differences of the operator
\begin{equation}\label{eq:advdiff2dric}
  \partial_{xx}+\partial_{yy} -10x\partial_x -100y\partial_y
\end{equation}
on the domain $[0,1]^2$ with homogeneous Dirichlet boundary conditions.
Moreover,
the components $b_k$ of the vector $b$ are defined as
\begin{equation*}
b_k=\left\{
\begin{aligned}
    1 &\text{ if } 0.1<x_i\leq 0.3,\\
    0 &\text{ otherwise},
\end{aligned}
\right.\quad
k=i+(j-1)\hat{n},\quad i=1,\ldots,\hat{n},\quad j=1,\ldots,\hat{n},
\end{equation*}
while for the components $c_k$ of the vector $c$ we take
\begin{equation*}
c_k=\left\{
\begin{aligned}
    1 &\text{ if } 0.7<x_i\leq 0.9,\\
    0 &\text{ otherwise},
\end{aligned}
\right.\quad
k=i+(j-1)\hat{n},\quad i=1,\ldots,\hat{n},\quad j=1,\ldots,\hat{n}.
\end{equation*}
Here $x_i$ represents the $i$th (inner) grid point along the $x$ direction.
Finally,
we set $\alpha=100$.

For the temporal integration of equation~\eqref{eq:exric} we use
the exponential Rosenbrock--Euler method, already employed in
References~\cite{LZL21,LZZ22},
and reported in formula~\eqref{eq:expRosEuler}
(see formula~\eqref{eq:expRosEulertensor}
for the \textsc{phisplit} version).
In fact, the Jacobian matrix of system~\eqref{eq:exric}
has the following Kronecker sum structure
\begin{equation*}
  K_n=I\otimes(A^\mathsf{T}+\boldsymbol U_nB)+
  (A+B\boldsymbol U_n)^{\sf T}\otimes I,
\end{equation*}
where $I$ is the identity matrix of size $n\times n$, with $n=\hat{n}^2$.
We remark that the exponential Rosenbrock--Euler \textsc{phisplit} method
requires at \emph{each time step} to evaluate
the matrix function
$\varphi_1(\tau (A^\mathsf{T}+\boldsymbol U_nB))$,
to compute the action $K\bb u_n$ 
and to perform one Tucker operator.
We will employ also the second order exponential Runge--Kutta method
ETD2RK, reported in formula~\eqref{eq:ETD2RK} and
presented in \textsc{phisplit} sense in formula~\eqref{eq:ETD2RKtensor}.
Although each time step of this integrator
requires two Tucker operators plus the action $K\bb u_n$
for the \textsc{phisplit} version, in a constant
time step size implementation
the needed matrix functions
$\varphi_1(\tau A^\mathsf{T})$ and
$\varphi_2(\tau A^\mathsf{T})$
can be computed once and for all at the beginning.

First of all, we verify the implementation of the involved
exponential integrators
for a long term simulation, i.e., until reaching the
steady state. For this experiment, we employ $\hat{n}=20$
inner discretization points for the $x$ and the $y$ variables.
As confirmed by the plot in Figure~\ref{fig:steadystate},
we see that all the methods, both in their \textsc{phiks} and \textsc{phisplit}
implementation, reach around time $0.15$ the solution of
equation~\eqref{eq:steady}, which
is obtained with the \textsc{matlab} function \texttt{icare} from the
Control System Toolbox.
\begin{figure}[htb!]
  \centering
  \input{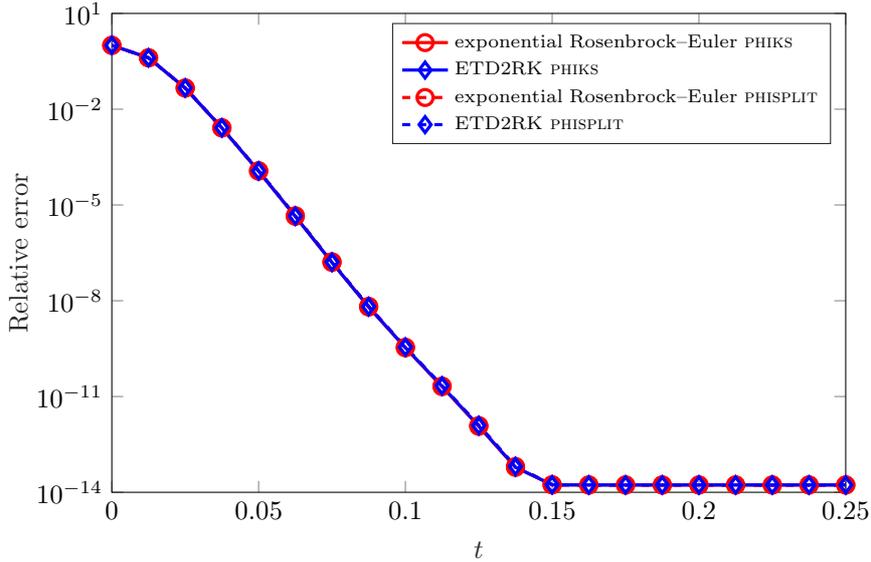}
  \caption{Convergence of the exponential Rosenbrock--Euler and of the ETD2RK
    methods, both in \textsc{phiks} and in \textsc{phisplit} variants,
    to the steady state of Riccati differential equation~\eqref{eq:exric}.
    Here $\hat{n}=20$,
    the integrators have been employed with 200 time steps, and the relative
    errors, measured in the Frobenius norm with respect to the solution of
    algebraic Riccati equation~\eqref{eq:steady}, are displayed each 10th
    time step.}
  \label{fig:steadystate}
\end{figure}

Then, we compare the performances of the integrators for the solution
of equation~\eqref{eq:exric} with $\hat{n}=30$ and final time $T=0.025$.
All methods are run with different time step sizes in such
a way to reach comparable relative errors
with respect to a reference solution. The number of time steps for
each method and simulation, together with the numerically observed
convergence rate, is reported in Table~\ref{tab:riccati_order}.
All the methods appear to be of second order, as expected.
\begin{table}[htb!]
  \centering
  \bgroup
  \def\arraystretch{1.2}
  \begin{tabular}{c||c|ccccc}
  \multirow{2}{*}{exponential Rosenbrock--Euler \textsc{phiks}} &
  steps & 10 & 20 & 30 & 40 & 50 \\\cline{2-7}
  &order &  --  & 2.11 & 2.06 & 2.05 & 2.03 \\
  \hline
  \multirow{2}{*}{ETD2RK \textsc{phiks}} &
  steps & 7 & 14 & 21 & 28 & 35 \\\cline{2-7}
  &order &  --  & 2.08 & 2.05 & 2.03 & 2.03 \\
  \hline
  \multirow{2}{*}{exponential Rosenbrock--Euler \textsc{phisplit}} &
  steps & 30 & 65 & 100 & 135 & 170 \\\cline{2-7}
  &order &  --  & 2.05 & 2.03 & 2.02 & 2.02 \\
  \hline
  \multirow{2}{*}{ETD2RK \textsc{phisplit}} &
  steps & 30 & 65 & 100 & 135 & 170 \\\cline{2-7}
  & order & --   & 2.05 & 2.03 & 2.02 & 2.02
  \end{tabular}
  \egroup
  \caption{Number of time steps and observed convergence rates for the
    time integration of Riccati differential
    equation~\eqref{eq:exric} up to
    final time $T=0.025$, with different exponential integrators and
    $\hat{n}=30$.
    The achieved errors and the wall-clock times are displayed in
    Figure~\ref{fig:riccati}.}
  \label{tab:riccati_order}
\end{table}

In Figure~\ref{fig:riccati} we report the relative errors and the corresponding
wall-clock times of the simulations. Here, we also include the performance
of the built-in \textsc{matlab} function \texttt{ode23}. This is an explicit
Runge--Kutta method of order three with variable step size, suggested for
not stringent tolerances and for moderately stiff problems. In fact,
it turned out to be the fastest routine in the ODE suite to reach
accuracies in the same range of the other methods.
We notice first of all that the exponential Rosenbrock--Euler method
is always faster than ETD2RK in the \textsc{phiks} implementation,
that is with the action of matrix functions computed at a precision that
does not affect the temporal error (see the discussion at the beginning
of the section). On the other hand, the two implementations with
\textsc{phisplit} are always faster compared with
their \textsc{phiks} counterparts, although they require a larger
number of time steps to reach a comparable accuracy. Moreover,
the ETD2RK method turns out to be faster
with respect to the exponential Rosenbrock--Euler method.
This is mainly due to the fact that the matrix functions in the Runge--Kutta
case are computed only once before the time marching. This method is in fact
at least twice as fast as the other exponential methods and faster
than \texttt{ode23}, which anyway shows a good performance
for the most stringent tolerances.
\begin{figure}[htb!]
  \centering
%
%
%
\begin{tikzpicture}

\begin{axis}[%
width=3.8in,
height=2.5in,
at={(0.769in,0.477in)},
scale only axis,
xmin=0,
xmax=45,
xlabel style={font=\color{white!15!black}},
xlabel={wall-clock time},
ymode=log,
ymin=1e-6,
ymax=1e-3,
yminorticks=true,
ylabel style={font=\color{white!15!black}},
ylabel={Relative error},
axis background/.style={fill=white},
legend style={legend cell align=left, align=left, draw=white!15!black,font=\scriptsize},
]

  \addplot [color=red, line width=1.5pt, mark size=3pt,mark=o, mark options={solid, red}]
  table[row sep=crcr]{%
7.245043	0.000164278362395639\\
12.768241	3.80463295039712e-05\\
18.837567	1.64713347918775e-05\\
25.100172	9.14433664075329e-06\\
32.350285	5.80724316399365e-06\\
};
\addlegendentry{exponential Rosenbrock--Euler \textsc{phiks}}

\addplot [color=blue,line width=1.5pt, mark size=3pt, mark=diamond, mark options={solid, blue}]
  table[row sep=crcr]{%
11.096616	0.000125111098081683\\
18.585951	2.95178390772025e-05\\
26.695792	1.287458889008e-05\\
34.362065	7.17497718911103e-06\\
43.492348	4.56600169969307e-06\\
};
\addlegendentry{ETD2RK \textsc{phiks}}

\addplot [color=red,line width=1.5pt, dashed, mark size=3pt, mark=o, mark options={solid, red}]
  table[row sep=crcr]{%
5.643451	0.000161054737647592\\
10.629037	3.31032060605755e-05\\
17.17628	1.38317057433022e-05\\
21.162693	7.54702627959899e-06\\
25.639637	4.74246420866496e-06\\
};
\addlegendentry{exponential Rosenbrock--Euler \textsc{phisplit}}

  \addplot [color=blue, line width=1.5pt, dashed, mark size=3pt,mark=diamond, mark options={solid, blue}]
  table[row sep=crcr]{%
2.630911	0.00017185971101606\\
5.065323	3.53283429241861e-05\\
7.762737	1.47616662339618e-05\\
10.480314	8.05456774635061e-06\\
13.051869	5.06151122382156e-06\\
};
\addlegendentry{ETD2RK \textsc{phisplit}}

\addplot [color=magenta, line width=1.5pt, dashed, mark size=3pt,mark=asterisk, mark options={solid, magenta}]
  table[row sep=crcr]{%
1.55e1	1.84e-4\\
1.56e1	3.36e-05\\
1.61e1	1.66e-05\\
1.63e1	9.58e-06\\
1.64e1	6.36e-06\\
};
\addlegendentry{\texttt{ode23}}

\end{axis}
\end{tikzpicture}%
  \caption{Achieved errors in the Frobenius norm
    and wall-clock times in seconds for the solution
    of Riccati differential equation~\eqref{eq:exric} up to final
    time $T=0.025$, with different
    integrators and $\hat{n}=30$. The number of time steps for each
    exponential method is reported in
    Table~\ref{tab:riccati_order}. The input tolerances (both absolute
    and relative) for \texttt{ode23} are $5\mathrm{e}{-3}$,
    $1\mathrm{e}{-3}$, $1\mathrm{e}{-4}$,
    $5.5\mathrm{e}{-5}$, and $5\mathrm{e}{-5}$.}
  \label{fig:riccati}
\end{figure}
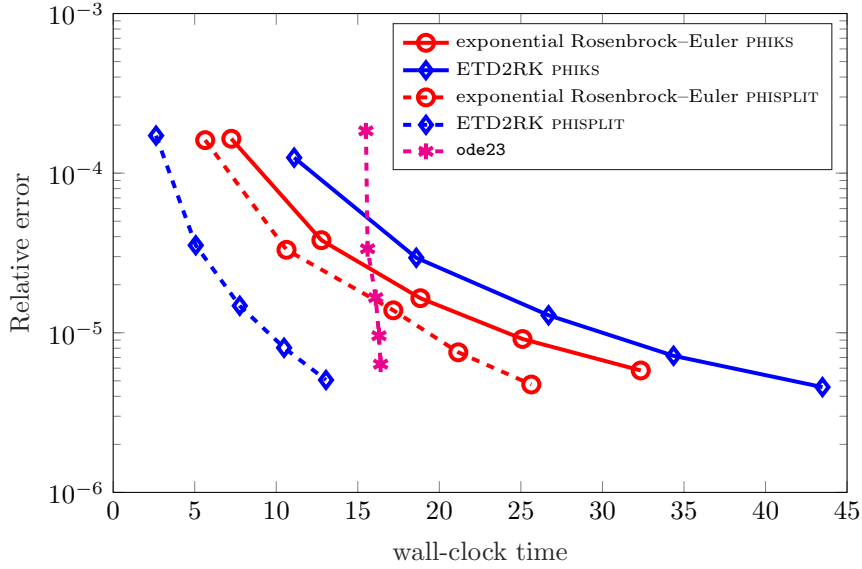

Finally, we repeat the same experiment with $\hat{n}=40$. The results are
presented in Table~\ref{tab:riccati_order_40} and in
Figure~\ref{fig:riccati_40}. The global behavior is similar with respect
to the previous case, although the speed-ups of the \textsc{phisplit}
implementations with respect to their \textsc{phiks} counterparts
is noticeably larger. In fact, ETD2RK \textsc{phisplit} is still
the best performant method.
\begin{table}[htb!]
  \centering
  \bgroup
  \def\arraystretch{1.2}
  \begin{tabular}{c||c|ccccc}
  \multirow{2}{*}{exponential Rosenbrock--Euler \textsc{phiks}} &
  steps & 15 & 30 & 45 & 60 & 75 \\\cline{2-7}
  &order &  --  & 2.10 & 2.06 & 2.04 & 2.03 \\
  \hline
  \multirow{2}{*}{ETD2RK \textsc{phiks}} &
  steps & 10 & 20 & 30 & 40 & 50 \\\cline{2-7}
  &order &  --  & 2.12 & 2.07 & 2.05 & 2.04 \\
  \hline
  \multirow{2}{*}{exponential Rosenbrock--Euler \textsc{phisplit}} &
  steps & 30 & 65 & 100 & 135 & 170 \\\cline{2-7}
  &order &  --  & 2.05 & 2.03 & 2.02 & 2.02 \\
  \hline
  \multirow{2}{*}{ETD2RK \textsc{phisplit}} &
  steps & 30 & 65 & 100 & 135 & 170 \\\cline{2-7}
  & order & --   & 2.05 & 2.03 & 2.02 & 2.02
  \end{tabular}
  \egroup
  \caption{Number of time steps and observed convergence rates for the
    time integration of Riccati differential
    equation~\eqref{eq:exric} up to
    final time $T=0.025$, with different exponential integrators and 
    $\hat{n}=40$.
    The achieved errors and the wall-clock times are displayed in
    Figure~\ref{fig:riccati_40}.}
  \label{tab:riccati_order_40}
\end{table}
\begin{figure}[htb!]
  \centering
%
%
%
\begin{tikzpicture}

\begin{axis}[%
width=3.8in,
height=2.5in,
at={(0.769in,0.477in)},
scale only axis,
xmin=0,
xmax=550,
xlabel style={font=\color{white!15!black}},
xlabel={wall-clock time},
ymode=log,
ymin=1e-06,
ymax=1e-3,
yminorticks=true,
ylabel style={font=\color{white!15!black}},
ylabel={Relative error},
axis background/.style={fill=white},
legend style={legend cell align=left, align=left, draw=white!15!black,font=\scriptsize}
]

\addplot [color=red, line width=1.5pt, mark=o, mark size=3pt, mark options={solid, red}]
  table[row sep=crcr]{%
77.379622	0.000152581963837212\\
150.17592	3.55689099963272e-05\\
223.494456	1.54419043987331e-05\\
297.738835	8.58598664996109e-06\\
371.476896	5.45802490906827e-06\\
};
\addlegendentry{exponential Rosenbrock--Euler \textsc{phiks}}

  \addplot [color=blue, line width=1.5pt, mark size=3pt, mark=diamond, mark options={solid, blue}]
  table[row sep=crcr]{%
108.096225	0.000208297348210989\\
202.231764	4.78052929672065e-05\\
303.59765	2.06260819439754e-05\\
404.105002	1.14278809982691e-05\\
503.437239	7.24631706941632e-06\\
};
\addlegendentry{ETD2RK \textsc{phiks}}

\addplot [color=red, line width=1.5pt, dashed, mark size=3pt, mark=o, mark options={solid, red}]
  table[row sep=crcr]{%
33.918777	0.000171947836794551\\
67.961675	3.53117849227183e-05\\
94.430805	1.4750630247508e-05\\
126.129618	8.04729551435252e-06\\
163.329652	5.05633673589586e-06\\
};
\addlegendentry{exponential Rosenbrock--Euler \textsc{phisplit}}

\addplot [color=blue, line width=1.5pt, dashed, mark size=3pt, mark=diamond, mark options={solid, blue}]
  table[row sep=crcr]{%
15.80114	0.000197437078730741\\
32.79077	4.04426845570676e-05\\
48.695702	1.6879679313532e-05\\
65.422701	9.2051782651072e-06\\
80.723248	5.7826719555826e-06\\
};
\addlegendentry{ETD2RK \textsc{phisplit}}

\addplot [color=magenta, line width=1.5pt, dashed, mark size=3pt, mark=asterisk, mark options={solid, magenta}]
  table[row sep=crcr]{%
127.078858	0.000197588464282935\\
127.629446	5.0930548888984e-05\\
127.786372	1.63290209233416e-05\\
128.472231	9.83416364349182e-06\\
129.517735	5.49795058708216e-06\\
};
\addlegendentry{\texttt{ode23}}

\end{axis}

\end{tikzpicture}%
  \caption{Achieved errors in the Frobenius norm
    and wall-clock times in seconds for the solution
    of Riccati differential equation~\eqref{eq:exric} up to final
    time $T=0.025$, with different
    integrators and $\hat{n}=40$. The number of time steps for each exponential
    method is reported in
    Table~\ref{tab:riccati_order_40}. The input tolerances (both absolute
    and relative) for \texttt{ode23} are $3\mathrm{e}{-3}$,
    $4\mathrm{e}{-4}$, $2.3\mathrm{e}{-4}$, $9.5\mathrm{e}{-5}$,
    and $8.7\mathrm{e}{-5}$.}
  \label{fig:riccati_40}
\end{figure}

\begin{remark}
  The discretization of the operator~\eqref{eq:advdiff2dric}
  has itself a Kronecker sum
structure. Hence, it is possible to write equation~\eqref{eq:exric}
(in vector formulation for simplicity of exposition) as
\begin{equation*}
\left\{
\begin{aligned}
    \boldsymbol u'(t) &= K \boldsymbol u + \boldsymbol g(\boldsymbol u),\\
    \boldsymbol u(0) &= \boldsymbol z,
\end{aligned}
\right.
\end{equation*}
where $\boldsymbol g$ and $\boldsymbol z $ are the vectorization
of the nonlinearity and of $\boldsymbol Z$, respectively, and $K$ has
the form
\begin{equation*}
  K = I \otimes I \otimes I \otimes D_1^{\mathsf{T}} +
      D_2^{\mathsf{T}} \otimes I \otimes I \otimes I.
\end{equation*}
Here $I$ is an identity matrix of size $\hat{n}\times\hat{n}$ and
$D_1\in\RR^{\hat{n}\times\hat{n}}$ and $D_2\in\RR^{\hat{n}\times\hat{n}}$
the discretizations of the operators $\partial_{xx}-10x\partial_x$ and
$\partial_{yy}-100y\partial_y$, respectively. In the context of temporal
integration with exponential Runge--Kutta schemes, we could then use
both the \textsc{phiks} and the \textsc{phisplit} approaches with the
even smaller sized matrices $D_1$ and $D_2$, forming then the approximations at
every time steps using Tucker operators with
order-4 tensors. However, as
this is just possible because of the specific form of the
operator~\eqref{eq:advdiff2dric}, we do not pursue this approach here.
\end{remark}

\subsection{Advection--diffusion--reaction}
We now consider the semidiscretization in space
of the following three-dimensional evolutionary
Advec\-tion--Diffusion--Reaction (ADR) equation (see Reference~\cite{CCZ23})
  \begin{equation}\label{eq:ADR}
    \left\{\begin{aligned}
  \partial_t u(t,x_1,x_2,x_3)&=\varepsilon \Delta u(t,x_1,x_2,x_3)+
  \alpha(\partial_{x_1} +\partial_{x_2}+\partial_{x_3})u(t,x_1,x_2,x_3)\\
  &+g(t,x_1,x_2,x_3,u(t,x_1,x_2,x_3)),\\
  u_0(x_1,x_2,x_3) &= 64x_1(1-x_1)x_2(1-x_2)x_3(1-x_3).
\end{aligned}\right.
  \end{equation}
The nonlinear function $g$ is given by
\begin{equation*}
  g(t,x_1,x_2,x_3,u(t,x_1,x_2,x_3))=\frac{1}{1+u(t,x_1,x_2,x_3)^2}+
  \Psi(t,x_1,x_2,x_3),
\end{equation*}
where $\Psi(t,x_1,x_2,x_3)$ is such that the analytical
solution of the equation is
\begin{equation*}
  u(t,x_1,x_2,x_3)=\ee^t u_0(x_1,x_2,x_3).
\end{equation*}
The problem is solved up to final time $T=1$ in the domain $[0,1]^3$ and
completed with homogeneous Dirichlet
boundary conditions.
The remaining parameters are set to $\varepsilon=0.75$ and $\alpha=0.1$.
By semidiscretizing in space with second order centered
finite differences, we obtain
a system of type~\eqref{eq:ODE} with $K$
having in three-dimensional Kronecker sum
structure,
where $A_\mu$ approximates
$\varepsilon\partial_{x_\mu x_\mu}+\alpha\partial_{x_\mu}$.
We first perform simulations with
$n_1=40$, $n_2=41$, and $n_3=42$ inner discretization
points for the $x_1$, $x_2$ and $x_3$ variables, respectively.
The temporal integration is performed with four methods:
the Lawson--Euler scheme~\eqref{eq:lawsoneuler}, the exponential Euler
method~\eqref{eq:expEuler}, the Lawson2b scheme~\eqref{eq:lawson2}
and the ETD2RK method~\eqref{eq:ETD2RK}
(see Section~\ref{sec:methods} for their
practical implementation and the \textsc{phisplit} versions).
In particular, concerning the Lawson schemes, the needed matrix exponentials
$\exp(\tau A_\mu)$, with $\mu=1,2,3$,
are computed once and for all at the beginning. Then,
one and two Tucker operators per time step, for the
first order and second order scheme, respectively, are required
to form the approximations during the temporal integration.
Concerning the \textsc{phisplit} implementation of exponential
Euler and ETD2RK, again we compute once and for all the needed
matrix functions $\varphi_1(\tau A_\mu)$
and $\varphi_2(\tau A_\mu)$ before starting the temporal
integration, and we then combine them suitably at each time step. This
operation requires
a single Tucker operator for the first order scheme and two for the second
order one, as for the aforementioned Lawson schemes, plus the action $K\bb u_n$.

The number of time steps for each
method, for both the \textsc{phisplit} and \textsc{phiks} implementations, is
reported in Table~\ref{tab:adr_order_40}, while the reached relative errors and
the wall-clock times are summarized in Figure~\ref{fig:adr_ord12_40}.
\begin{table}[!htb]
  \centering
  \bgroup
  \def\arraystretch{1.2}
  \begin{tabular}{c||c|ccccc}
  \multirow{2}{*}{Lawson--Euler} &
  steps & 800 & 8800 & 16800 & 24800 & 32800 \\\cline{2-7}
  &order &  --  & 1.00 & 1.00 & 1.00 & 1.00 \\
  \hline
  \multirow{2}{*}{exponential Euler \textsc{phiks}} &
  steps & 50 & 450 & 850 & 1250 & 1650 \\\cline{2-7}
  &order & --   & 1.03 & 1.00 & 1.00 & 1.00 \\
  \hline
  \multirow{2}{*}{exponential Euler \textsc{phisplit}} &
  steps & 50 & 450 & 850 & 1250 & 1650 \\\cline{2-7}
  & order &  --  & 1.03 & 1.01 & 1.00 & 1.00 \\
  \multicolumn{7}{c}{}\\
  \multirow{2}{*}{Lawson2b} &
  steps & 1500 & 5500 & 9500 & 13500 & 17500 \\\cline{2-7}
  &order &  --  & 1.96 & 1.99 & 2.00 & 2.00 \\
  \hline
  \multirow{2}{*}{ETD2RK \textsc{phiks}} &
  steps & 20 & 80 & 140 & 200 & 260 \\\cline{2-7}
  &order &  --  & 1.94 & 1.97 & 1.98 & 1.99 \\
  \hline
  \multirow{2}{*}{ETD2RK \textsc{phisplit}} &
  steps & 40 & 140 & 240 & 340 & 440 \\\cline{2-7}
  & order & --   & 2.10 & 2.04 & 2.03 & 2.02 \\
  \end{tabular}
  \egroup
  \caption{Number of time steps and observed convergence rates for the
    time integration of
    the semidiscretization of the ADR
    equation~\eqref{eq:ADR} up to
    final time $T=1$ , with different exponential integrators.
    Here we considered $n_1=40$, $n_2=41$ and $n_3=42$ inner space
    discretization
    points for the $x_1$, $x_2$ and $x_3$ variables, respectively.
    The achieved errors and the wall-clock times are displayed in
    Figure~\ref{fig:adr_ord12_40}.}
  \label{tab:adr_order_40}
\end{table}%
\begin{figure}[!htb]
  \centering
%
%
%
\begin{tikzpicture}

\begin{axis}[%
width=3.8in,
height=2.5in,
at={(0.769in,0.477in)},
scale only axis,
xmin=0,
xmax=22,
xlabel style={font=\color{white!15!black}},
xlabel={wall-clock time},
ymode=log,
ymin=1e-4,
ymax=1e-1,
yminorticks=true,
ylabel style={font=\color{white!15!black}},
ylabel={Relative error},
axis background/.style={fill=white},
legend style={legend cell align=left, align=left, draw=white!15!black, font=\scriptsize}
]

\addplot [color=teal,line width=1.5pt, mark size=3pt, dotted, mark=x, mark options={solid, teal}]
  table[row sep=crcr]{%
0.222823	0.0117169705391758\\
2.669643	0.00106791877453733\\
4.922048	0.000559454399926992\\
7.768094	0.000379001674524301\\
10.006463	0.000286568607256026\\
};
\addlegendentry{Lawson--Euler}

\addplot [color=orange,line width=1.5pt, mark size=3pt, mark=triangle, mark options={solid, orange}]
  table[row sep=crcr]{%
0.438731000000001	0.0106292111502372\\
2.723039	0.00110753252968165\\
4.718193	0.000584623666850009\\
6.512938	0.000397123417112195\\
9.113223	0.000300686845803868\\
};
\addlegendentry{exponential Euler \textsc{phiks}}

\addplot [color=orange,line width=1.5pt, mark size=3pt, dashed, mark=triangle, mark options={solid, orange}]
  table[row sep=crcr]{%
0.0992120000000001	0.0107081236857995\\
0.384999	0.00111027419393683\\
0.660487	0.000585386452197191\\
0.871026	0.000397475190801764\\
1.020225	0.000300888458785554\\
};
\addlegendentry{exponential Euler \textsc{phisplit}}

\end{axis}

\end{tikzpicture}%
%
%
%
\begin{tikzpicture}

\begin{axis}[%
width=3.8in,
height=2.5in,
at={(0.758in,0.481in)},
scale only axis,
xmin=0,
xmax=22,
xlabel style={font=\color{white!15!black}},
xlabel={wall-clock time},
ymode=log,
ymin=1e-06,
ymax=0.001,
yminorticks=true,
ylabel style={font=\color{white!15!black}},
ylabel={Relative error},
axis background/.style={fill=white},
legend style={legend cell align=left, align=left, draw=white!15!black, font=\scriptsize}
]

\addplot [color=teal,line width=1.5pt, mark size=3pt, dotted, mark=square, mark options={solid, teal}]
  table[row sep=crcr]{%
0.945534	0.000315674843375309\\
3.631012	2.487068517044e-05\\
6.047837	8.3649292181883e-06\\
8.524288	4.14595608695139e-06\\
11.644669	2.46812995166966e-06\\
};
\addlegendentry{Lawson2b}

\addplot [color=blue,line width=1.5pt, mark size=3pt, mark=diamond, mark options={solid, blue}]
  table[row sep=crcr]{%
0.367943	0.000307538535906832\\
1.362085	2.09342120457308e-05\\
2.348924	6.94077686699274e-06\\
3.287802	3.42273926898397e-06\\
3.791115	2.03237304479068e-06\\
};
\addlegendentry{ETD2RK \textsc{phiks}}

\addplot [color=blue,line width=1.5pt, mark size=3pt, dashed, mark=diamond, mark options={solid, blue}]
  table[row sep=crcr]{%
0.0731409999999997	0.0003034138277914\\
0.163673	2.19309513095735e-05\\
0.278888	7.30092898625085e-06\\
0.340922	3.60467279794151e-06\\
0.450544	2.14158850419228e-06\\
};
\addlegendentry{ETD2RK \textsc{phisplit}}

\addplot [color=magenta,line width=1.5pt, mark size=3pt, dashed, mark=asterisk, mark options={solid, magenta}]
  table[row sep=crcr]{%
14.49675	1.6976e-05\\
19.624603	5.6487e-06\\
20.358253	2.8646e-06\\
21.577953	2.1343e-06\\
};
\addlegendentry{\texttt{ode23t}}

\end{axis}

\end{tikzpicture}%
  \caption{Achieved errors in the infinity norm
    and wall-clock times in seconds for the solution
    of the semidiscretization of the ADR
    equation~\eqref{eq:ADR} up to final time $T=1$, with different exponential
    integrators of order one (top) and order two (bottom).
    Here we considered $n_1=40$, $n_2=41$ and $n_3=42$ inner space
    discretization
    points for the $x_1$, $x_2$ and $x_3$ variables, respectively.
    The number of time steps for each exponential
    method is reported in
    Table~\ref{tab:adr_order_40}. The input tolerances (both absolute and
    relative) for \texttt{ode23t} are $8\mathrm{e}{-3}$,
    $4\mathrm{e}{-5}$, $1\mathrm{e}{-5}$, and $5\mathrm{e}{-6}$.}
  \label{fig:adr_ord12_40}
\end{figure}%
First of all, we notice that all the methods show the expected convergence rate,
reported in Table~\ref{tab:adr_order_40} as well.
The Lawson--Euler method and the exponential Euler scheme
in its \textsc{phiks} implementation (see
top plot of Figure~\ref{fig:adr_ord12_40}) perform equally well,
even if the former requires much more time steps. Overall, the exponential
Euler method in its \textsc{phisplit} variant is roughly 10 times faster to
reach the highest accuracy in this experiment.
If we consider the second order methods
(bottom plot of Figure~\ref{fig:adr_ord12_40}), we observe that the Lawson2b
scheme needs much more wall-clock time to reach the same level of accuracy of
the other methods, and overall the best performant method is ETD2RK in the
\textsc{phisplit} variant.
In this plot we report also the results obtained with the internal
\textsc{matlab}
\texttt{ode23t} integrator.
It is an implicit Runge--Kutta method of order three with variable
step size, which
is suggested for stiff problems at low accuracies. Nevertheless, it
performs worse than the considered exponential integrators.

Finally, we repeat the experiment with
$n_1=80$, $n_2=81$, and $n_3=82$ inner discretization
points for the $x_1$, $x_2$ and $x_3$ variables, respectively.
The number of time steps for each
method is
reported in Table~\ref{tab:adr_order}, while the  relative errors and
the wall-clock times are summarized in Figure~\ref{fig:adr_ord12}.
\begin{table}[!htb]
  \centering
  \bgroup
  \def\arraystretch{1.2}
  \begin{tabular}{c||c|ccccc}
  \multirow{2}{*}{Lawson--Euler} &
  steps & 800 & 8800 & 16800 & 24800 & 32800 \\\cline{2-7}
  &order &  --  & 1.00 & 1.00 & 1.00 & 1.00 \\
  \hline
  \multirow{2}{*}{exponential Euler \textsc{phiks}} &
  steps & 50 & 450 & 850 & 1250 & 1650 \\\cline{2-7}
  &order & --   & 1.02 & 1.00 & 1.00 & 1.00 \\
  \hline
  \multirow{2}{*}{exponential Euler \textsc{phisplit}} &
  steps & 50 & 450 & 850 & 1250 & 1650 \\\cline{2-7}
  & order &  --  & 1.03 & 1.01 & 1.00 & 1.00 \\
  \multicolumn{7}{c}{}\\
  \multirow{2}{*}{Lawson2b} &
  steps & 3000 & 4500 & 6000 & 7500 & 9000 \\\cline{2-7}
  &order &  --  & 1.79 & 1.87 & 1.92 & 1.94 \\
  \hline
  \multirow{2}{*}{ETD2RK \textsc{phiks}} &
  steps & 20 & 80 & 140 & 200 & 260 \\\cline{2-7}
  &order &  --  & 1.94 & 1.97 & 1.98 & 1.99 \\
  \hline
  \multirow{2}{*}{ETD2RK \textsc{phisplit}} &
  steps & 40 & 140 & 240 & 340 & 440 \\\cline{2-7}
  & order & --   & 2.10 & 2.04 & 2.03 & 2.02 \\
  \end{tabular}
  \egroup
  \caption{Number of time steps and observed convergence rates for the
    time integration of
    the semidiscretization of the ADR
    equation~\eqref{eq:ADR} up to
    final time $T=1$ , with different exponential integrators.
    Here we considered $n_1=80$, $n_2=81$ and $n_3=82$ inner space
    discretization
    points for the $x_1$, $x_2$ and $x_3$ variables, respectively.
    The achieved errors and the wall-clock times are displayed in
    Figure~\ref{fig:adr_ord12}.}
  \label{tab:adr_order}
\end{table}%
\begin{figure}[!htb]
  \centering
%
%
\begin{tikzpicture}

\begin{axis}[%
width=3.8in,
height=2.5in,
at={(0.758in,0.481in)},
scale only axis,
xmin=0,
xmax=160,
xlabel style={font=\color{white!15!black}},
xlabel={wall-clock time},
ymode=log,
ymin=1e-4,
ymax=1e-1,
yminorticks=true,
ylabel style={font=\color{white!15!black}},
ylabel={Relative error},
axis background/.style={fill=white},
legend style={legend cell align=left, align=left, draw=white!15!black,font=\scriptsize},
]

\addplot [color=teal,line width=1.5pt, dotted, mark size=3pt, mark=x, mark options={solid, teal}]
  table[row sep=crcr]{%
3.830015	0.0117093166959706\\
40.023919	0.00106722437807435\\
77.239923	0.000559090769518005\\
112.165384	0.000378755369010267\\
149.113743	0.000286382386118996\\
};
\addlegendentry{Lawson--Euler}

\addplot [color=orange,line width=1.5pt, mark size=3pt, mark=triangle, mark options={solid, orange}]
  table[row sep=crcr]{%
2.229278	0.0104891960292066\\
17.652909	0.00110752656870146\\
33.506459	0.000584621978358339\\
46.286751	0.000397122627788412\\
60.386908	0.000300686387764159\\
};
\addlegendentry{exponential Euler \textsc{phiks}}

\addplot [color=orange,line width=1.5pt, dashed, mark size=3pt, mark=triangle, mark options={solid, orange}]
  table[row sep=crcr]{%
0.603194999999999	0.0107075742367188\\
4.960021	0.00111026771878275\\
9.151983	0.000585384629859972\\
13.933355	0.000397474341289116\\
17.860089	0.000300887966703942\\
};
\addlegendentry{exponential Euler \textsc{phisplit}}

\end{axis}

\end{tikzpicture}%
%
%
\begin{tikzpicture}

\begin{axis}[%
width=3.8in,
height=2.5in,
at={(0.769in,0.477in)},
scale only axis,
xmin=0,
xmax=160,
xlabel style={font=\color{white!15!black}},
xlabel={wall-clock time},
ymode=log,
ymin=1e-06,
ymax=0.001,
yminorticks=true,
ylabel style={font=\color{white!15!black}},
ylabel={Relative error},
axis background/.style={fill=white},
legend style={legend cell align=left, align=left, draw=white!15!black,font=\scriptsize},
]
\addplot [color=teal,line width=1.5pt, dotted, mark size=3pt, mark=square, mark options={solid, teal}]
  table[row sep=crcr]{%
33.380836	0.000258374265688116\\
51.170973	0.000124894648381687\\
67.920979	7.28638588362354e-05\\
87.562451	4.75123922123985e-05\\
102.31712	3.33502762327595e-05\\
};
\addlegendentry{Lawson2b}

\addplot [color=blue,line width=1.5pt, mark size=3pt, mark=diamond, mark options={solid, blue}]
  table[row sep=crcr]{%
2.420618	0.000307505689306091\\
9.45234	2.0934011403357e-05\\
16.313003	6.94075073692446e-06\\
22.767557	3.42273413204819e-06\\
29.234035	2.03236901281894e-06\\
};
\addlegendentry{ETD2RK \textsc{phiks}}

\addplot [color=blue,line width=1.5pt, dashed, mark size=3pt, mark=diamond, mark options={solid, blue}]
  table[row sep=crcr]{%
0.733109	0.000303319250943859\\
2.541384	2.19248936111415e-05\\
4.396807	7.29905280522352e-06\\
5.868383	3.60379248212235e-06\\
7.815809	2.14108285352612e-06\\
};
\addlegendentry{ETD2RK \textsc{phisplit}}

\end{axis}
\end{tikzpicture}%
  \caption{Achieved errors in the infinity norm
    and wall-clock times in seconds for the solution
    of the semidiscretization of the ADR
    equation~\eqref{eq:ADR} up to final time $T=1$, with different exponential
    integrators of order one (top) and order two (bottom).
    Here we considered $n_1=80$, $n_2=81$ and $n_3=82$ inner space
    discretization
    points for the $x_1$, $x_2$ and $x_3$ variables, respectively.
    The number of time steps for each
    method is reported in
    Table~\ref{tab:adr_order}.}
  \label{fig:adr_ord12}
\end{figure}%
Again, we notice that all the methods show the expected convergence rate,
reported in Table~\ref{tab:adr_order} as well.
In particular, for large time step sizes, the Lawson2b method
suffers from an order reduction.
This is expected, as in these cases the problem
is more stiff, and schemes which employ just the exponential function are
affected by this phenomenon (see, for instance, Reference~\cite{HLO20}).
Then, from Figure~\ref{fig:adr_ord12}
we observe that the \textsc{phisplit} approach
is in any case the most performant among all the methods and techniques
considered,
with an increasing speedup for more stringent accuracies. More in detail,
compared with its \textsc{phiks} counterparts,
the \textsc{phisplit} implementations are roughly
3.5 time faster,
even if (in general) they require more time steps to reach a
comparable accuracy.
Finally, the Lawson schemes are always the least performant.
This is mainly due to the requirement of a large number of time steps to
reach the accuracy of the other methods, which is
particularly evident for the second order
schemes (see bottom of Table~\ref{tab:adr_order} and
Figure~\ref{fig:adr_ord12}). Moreover, while ETD2RK in its
\textsc{phisplit} variant reached the most stringent
accuracy in less than 10 seconds, Lawson2b was not able
to reach an accuracy 10 times larger in 100 seconds. Hence,
we decided to stop the simulation with this integrator
with a larger number of time steps.
Finally, concerning the internal \textsc{matlab} ODE suite, none of the methods
was able to output a solution within 10 minutes.

\section{Conclusions}\label{sec:conclusions}
In this paper, we presented how it is possible to efficiently approximate
actions of $\varphi$-functions for matrices with $d$-dimensional Kronecker sum
structure using a $\mu$-mode based approach. The technique, that we call
\textsc{phisplit}, is suitable when integrating initial valued ordinary
differential equations with exponential integrators up to second order.
It is based on an inexact direction splitting of the matrix functions
involved in the
time marching schemes which preserves the order of the method. The effectiveness
and superiority of the approach, with respect to another technique to compute
actions of $\varphi$-functions in Kronecker form, has been shown on a
two-dimensional problem from linear quadratic control and on a three-dimensional
advection--diffusion--reaction equation, using a variety of exponential
integrators. Interesting future developments would be to generalize the
approach for higher order integrators and performing GPU simulations with the
\textsc{phisplit} technique, possibly in single and/or half precision
(which are compatible with the magnitude of the errors of the temporal
integration), for different problems from science and engineering.
\section*{Acknowledgments}
The authors acknowledge partial support from the Program Ricerca di Base
2019 No.~RBVR199YFL
of the University of Verona entitled ``Geometric Evolution of Multi Agent
Systems''.

\bibliographystyle{plain}
    \bibliography{phisplit}

\begin{thebibliography}{10}

\bibitem{AKFIJ03}
H.~Abou-Kandil, G.~Freiling, V.~Ionescu, and G.~Jank.
\newblock {\em Matrix Riccati Equations in Control and Systems Theory}.
\newblock Birkh\"auser Springer, Basel, 2003.

\bibitem{AMH10}
A.~H. Al-Mohy and N.~J. Higham.
\newblock A {N}ew {S}caling and {S}quaring {A}lgorithm for the {M}atrix
  {E}xponential.
\newblock {\em SIAM J. Matrix Anal. Appl.}, 31(3):970--989, 2010.

\bibitem{AMH11}
A.~H. Al-Mohy and N.~J. Higham.
\newblock Computing the {A}ction of the {M}atrix {E}xponential, with an
  {A}pplication to {E}xponential {I}ntegrators.
\newblock {\em SIAM J. Sci. Comput.}, 33(2):488--511, 2011.

\bibitem{BSW05}
H.~Berland, B.~Skaflestad, and W.~M. Wright.
\newblock {EXPINT} --- {A} {MATLAB} package for exponential integrators.
\newblock Technical Report~4, Norwegian University of Science and Technology,
  2005.

\bibitem{BSW07}
H.~Berland, B.~Skaflestad, and W.~M. Wright.
\newblock {EXPINT}---{A} {MATLAB} {P}ackage for {E}xponential {I}ntegrators.
\newblock {\em ACM Trans. Math. Softw.}, 33(1):Article 4, 2007.

\bibitem{CCEOZ22}
M.~Caliari, F.~Cassini, L.~Einkemmer, A.~Ostermann, and F.~Zivcovich.
\newblock A $\mu$-mode integrator for solving evolution equations in
  {K}ronecker form.
\newblock {\em J. Comput. Phys.}, 455:110989, 2022.

\bibitem{CCZ20}
M.~Caliari, F.~Cassini, and F.~Zivcovich.
\newblock Approximation of the matrix exponential for matrices with a skinny
  field of values.
\newblock {\em BIT Numer. Math.}, 60(4):1113--1131, 2020.

\bibitem{CCZ23}
M.~Caliari, F.~Cassini, and F.~Zivcovich.
\newblock A $\mu$-mode approach for exponential integrators: actions of
  $\varphi$-functions of {K}ronecker sums.
\newblock {\em arXiv preprint arXiv:2210.07667}, 2022.

\bibitem{CCZ22b}
M.~Caliari, F.~Cassini, and F.~Zivcovich.
\newblock {BAMPHI}: {M}atrix and transpose free action of the combinations of
  $\varphi$-functions from exponential integrators.
\newblock {\em J. Comput. Appl. Math.}, 423:114973, 2023.

\bibitem{CCZ23kp}
M.~Caliari, F.~Cassini, and F.~Zivcovich.
\newblock A $\mu$-mode {BLAS} approach for multidimensional tensor-structured
  problems.
\newblock {\em Numer. Algorithms}, 92(4):2483--2508, 2023.

\bibitem{CKOR16}
M.~Caliari, P.~Kandolf, A.~Ostermann, and S.~Rainer.
\newblock The {L}eja {M}ethod {R}evisited: {B}ackward {E}rror {A}nalysis for
  the {M}atrix {E}xponential.
\newblock {\em SIAM J. Sci. Comput.}, 38(3):A1639--A1661, 2016.

\bibitem{CZ19}
M.~Caliari and F.~Zivcovich.
\newblock On-the-fly backward error estimate for matrix exponential
  approximation by {T}aylor algorithm.
\newblock {\em J. Comput. Appl. Math.}, 346:532--548, 2019.

\bibitem{CMM23}
M.~Croci and J.~Mu{\~n}oz-Matute.
\newblock Exploiting {K}ronecker structure in exponential integrators: {F}ast
  approximation of the action of $\varphi$-functions of matrices via
  quadrature.
\newblock {\em J. Comput. Sci.}, 67:101966, 2023.

\bibitem{EO15}
L.~Einkemmer and A.~Ostermann.
\newblock Overcoming {O}rder {R}eduction in {D}iffusion-{R}eaction {S}plitting.
  {P}art 1: {D}irichlet {B}oundary {C}onditions.
\newblock {\em SIAM J. Sci. Comput.}, 37(3):A1577--A1592, 2015.

\bibitem{GRT18}
S.~Gaudreault, G.~Rainwater, and M.~Tokman.
\newblock {KIOPS: A} fast adaptive {K}rylov subspace solver for exponential
  integrators.
\newblock {\em J. Comput. Phys.}, 372:236--255, 2018.

\bibitem{GOT06}
C.~Gonz\'alez, A.~Ostermann, and M.~Thalhammer.
\newblock A second-order {M}agnus-type integrator for nonautonomous parabolic
  problems.
\newblock {\em J. Comput. Appl. Math.}, 189(1--2):142--156, 2006.

\bibitem{HLO20}
M.~Hochbruck, J.~Leibold, and A.~Ostermann.
\newblock On the convergence of {L}awson methods for semilinear stiff problems.
\newblock {\em Numer. Math.}, 145:553--580, 2020.

\bibitem{HO10}
M.~Hochbruck and A.~Ostermann.
\newblock Exponential integrators.
\newblock {\em Acta Numer.}, 19:209--286, 2010.

\bibitem{LYL22}
D.~Li, S.~Yang, and J.~Lan.
\newblock Efficient and accurate computation for the $\varphi$‑functions
  arising from exponential integrators.
\newblock {\em Calcolo}, 59:Article 11, 2022.

\bibitem{LZL21}
D.~Li, X.~Zhang, and R.~Liu.
\newblock Exponential integrators for large-scale stiff {R}iccati differential
  equations.
\newblock {\em J. Comput. Appl. Math.}, 389:113360, 2021.

\bibitem{LZZ22}
D.~Li, Y.~Zhang, and X.~Zhang.
\newblock Computing the {L}yapunov operator $\varphi$-functions, with an
  application to matrix-valued exponential integrators.
\newblock {\em Appl. Numer. Math.}, 182:330--343, 2022.

\bibitem{LPR19}
V.~T. Luan, J.~A. Pudykiewicz, and D.~R. Reynolds.
\newblock Further development of efficient and accurate time integration
  schemes for meteorological models.
\newblock {\em J. Comput. Phys.}, 376:817--837, 2019.

\bibitem{MOPP18}
H.~Mena, A.~Ostermann, L.-M. Pfurtscheller, and C.~Piazzola.
\newblock Numerical low-rank approximation of matrix differential equations.
\newblock {\em J. Comput. Appl. Math.}, 340:602--614, 2018.

\bibitem{MMPC22}
J.~Mu{\~n}oz-Matute, D.~Pardo, and V.~M. Calo.
\newblock Exploiting the {K}ronecker product structure of $\varphi$-functions
  in exponential integrators.
\newblock {\em Int. J. Numer. Methods Eng.}, 123(9):2142--2161, 2022.

\bibitem{NW12}
J.~Niesen and W.~M. Wright.
\newblock Algorithm 919: A {K}rylov {S}ubspace {A}lgorithm for {E}valuating the
  $\phi$-{F}unctions {A}ppearing in {E}xponential {I}ntegrators.
\newblock {\em ACM Trans. Math. Softw.}, 38(3):1--19, 2012.

\bibitem{P00}
T.~Penzl.
\newblock {\em {LYAPACK}: A {MATLAB} {T}oolbox for {L}arge {L}yapunov and
  {R}iccati {E}quations, {M}odel {R}eduction {P}roblems, and
  {L}inear--{Q}uadratic {O}ptimal {C}ontrol {P}roblems. {U}sers' {G}uide
  ({V}ersion 1.0)}, 2000.

\bibitem{RS21}
F.~Rousset and K.~Schratz.
\newblock A {G}eneral {F}ramework of {L}ow {R}egularity {I}ntegrators.
\newblock {\em SIAM J. Numer. Anal.}, 59(3):1735--1768, 2021.

\bibitem{SID19}
J.~Sastre, J.~Ib{\'a}{\~n}ez, and E.~Defez.
\newblock Boosting the computation of the matrix exponential.
\newblock {\em Appl. Math. Comput.}, 340:206--220, 2019.

\bibitem{SW09}
B.~Skaflestad and W.~M. Wright.
\newblock The scaling and modified squaring method for matrix functions related
  to the exponential.
\newblock {\em Appl. Numer. Math.}, 59(3--4):783--799, 2009.

\bibitem{L00}
C.~F. Van~Loan.
\newblock The ubiquitous {K}ronecker product.
\newblock {\em J. Comput. Appl. Math.}, 123(1--2):85--100, 2000.

\end{thebibliography}
\end{document}